\def\ps@IEEEtitlepagestyle{%
  \def\@oddhead{\mycopyrightnotice}%
  \def\@oddfoot{\hbox{}\@IEEEheaderstyle\leftmark\hfil\thepage}\relax
  \def\@evenhead{\@IEEEheaderstyle\thepage\hfil\leftmark\hbox{}}\relax
  \def\@evenfoot{}%
}
\def\mycopyrightnotice{%
  \begin{minipage}{\textwidth}
  \scriptsize
  Copyright~\copyright~2020 IEEE. Personal use of this material is permitted. Permission from IEEE must be obtained for all other uses, in any current or future media, including reprinting/republishing this material for advertising or promotional purposes, creating new collective works, for resale or redistribution to servers or lists, or reuse of any copyrighted component of this work in other works by sending a request to pubs-permissions@ieee.org. Accepted and published in 2020 IEEE Latin-American Conference on Communications. Citation information: DOI: 10.1109/LATINCOM50620.2020.9282342.
  \end{minipage}
}
\newcommand*{\algrule}[1][\algorithmicindent]{%
  \makebox[#1][l]{%
    \hspace*{.2em}
    \vrule height .75\baselineskip depth .25\baselineskip
  }
}
\def\ALG@printindent{%
    \ifnum \theALG@nested>0
    \ifx\ALG@text\ALG@x@notext
    \else
    \unskip
    \ALG@printindent@tempcnta=1
    \loop
    \algrule[\csname ALG@ind@\the\ALG@printindent@tempcnta\endcsname]%
    \advance \ALG@printindent@tempcnta 1
    \ifnum \ALG@printindent@tempcnta<\numexpr\theALG@nested+1\relax
    \repeat
    \fi
    \fi
}
\patchcmd{\ALG@doentity}{\noindent\hskip\ALG@tlm}{\ALG@printindent}{}{\errmessage{failed to patch}}
\patchcmd{\ALG@doentity}{\item[]\nointerlineskip}{}{}{} 
\IEEEoverridecommandlockouts \IEEEpubid{\makebox[\columnwidth]{
978-1-7281-8903-1/20/\$31.00~\copyright{}2020 IEEE \hfill}
\hspace{\columnsep}\makebox[\columnwidth]{ }}
\begin{document}
%
\title{A Low-Complexity Multi-Survivor Dynamic Programming for Constrained Discrete Optimization}

\author{\IEEEauthorblockN{I. Zakir Ahmed  and Hamid Sadjadpour\\}
\IEEEauthorblockA{Department of Electrical and Computer Engineering\\
University of California, Santa Cruz\\
}
\and
\IEEEauthorblockN{Shahram Yousefi}
\IEEEauthorblockA{Department of Electrical and Computer Engineering\\
Queen's University, Canada\\
}}


%


\maketitle

\begin{abstract}
Constrained discrete optimization problems are encountered in many areas of communication and machine learning. We consider the case where the objective function satisfies Bellman's optimality principle without the constraints on which we place no conditions. We first show that these problems are a generalization of optimization in constrained Markov decision processes with finite horizon used in reinforcement learning and are NP-Hard. We then present a novel multi-survivor dynamic programming (msDP) algorithm that guarantees optimality at significant computational savings. We demonstrate this by solving 5G quantizer bit allocation and DNA fragment assembly problems. The results are very promising and suggest that msDP can be used for many applications.
\end{abstract}
\IEEEpeerreviewmaketitle

\newcommand{\Xmatrix}{
\begin{bmatrix}
\ddots  & 0     & 0 \\
0  & \frac{1}{{\sigma_i^2}} & 0 \\
0 & 0 & \ddots
\end{bmatrix}}
\newcommand{\Ymatrix}{
\begin{bmatrix}
\ddots  & 0  & 0 \\
0  & \frac{f(b_i)l_i}{\big(1-f(b_i)\big) \sigma_i^2} & 0 \\
0 & 0 & \ddots
\end{bmatrix}}
\newcommand{\Zmatrix}{
\begin{bmatrix}
\ddots  & 0  & 0 \\
0  & \frac{\sigma_i^2}{\sigma_n^2 + \frac{f(b_i)l_i}{\big(1-f(b_i)\big)}} & 0 \\
0 & 0 & \ddots
\end{bmatrix}}
\newcommand{\ZMmatrix}{
\begin{bmatrix}
\ddots  & 0  & 0 \\
0  & \frac{\sigma_i^2}{\sigma_n^2 + \frac{f(b_i)l_i}{\big(1-f(b_i)\big)}} + \frac{1}{p} & 0 \\
0 & 0 & \ddots
\end{bmatrix}}
\newcommand{\Fmatrix}{
\begin{bmatrix}
\ddots  & 0  & 0 \\
0  & 10 & 0 \\
0 & 0 & \ddots 
\end{bmatrix}}
\newcommand{\InvFmatrix}{
\begin{bmatrix}
\ddots  & 0 & 0 \\
0  & f(b_i)\big(1-f(b_i)\big)l_i & 0 \\
0 & 0 & \ddots
\end{bmatrix}}
\section{Introduction}
\label{Intro}
We define a class of problems $H$ that are constrained discrete optimization problems with the objective function (OF) satisfying the principle of optimality (PO) without considering the constraints \cite{Bellman}. The constraint functions are assumed to be neither convex nor linear in their decision variables. Nor are the constraints required to satisfy the linear independence constraint qualification (LICQ) \cite{Nocedal}. These problems, in general, are NP-Hard \cite{NPHard}. Solving $H$ to optimality with reduced computational resources is highly desirable. Popular methods like Dynamic Programming (DP), Branch and Bound (BB), and Lagrangian relaxation methods can solve $H$ to optimality \cite{DynP1,DynP2,Bbound, Kolman, Fisher} as long as we place either linearity or quadratic conditions on the OF or constraints \cite{Bbound,LinShu}, while many others do not ensure optimality \cite{Plancher}.\\
\indent Examples of $H$ include resource allocation (power, ADC bits) in massive Multiple-Input Multiple-Output (MaMIMO) systems with constraints \cite{Zakir2}, constrained network optimization problems used in routing protocols \cite{NetApp}, and Reinforcement Learning (RL) applied to Markov decision processes with constraints \cite{RLbook}, to name a few.\\

\noindent The contributions of this paper are as follows
\begin{itemize}
\item we propose a novel multi-survivor Dynamic Programming (msDP) algorithm to solve $H$ optimally at significantly reduced computational cost, and, 
\item we show that Constrained Markov Decision Processes (CMDP) with finite horizons belong to $H$.
\end{itemize}

\textit{Notations:}
The column vectors are represented as boldface small letters. The superscripts $T$ denote transpose. The terms $\mathbb{I}$ and $\mathbb{R}$, indicate space of integers and real numbers respectively. We represent a vector $\bold{x} = [x_1, x_2,\cdots,x_N]^T$ whose entries are constants up to $x_i$ for $i \le m$ as $\bold{x}^{m}$ and call such vectors partial, i.e., $\bold{x}^{m} = [a_1, a_2,\cdots, a_m, x_{m+1},\cdots, x_N]^T$ where $a_i$'s are constants. The $x_i$'s belong to a finite set $\mathcal{X}$ with cardinality $M$. If there are multiple of such partial vectors under consideration, we refer to the $k^{\text{th}}$ such vector as $\bold{x}_{k}^{m}$ and its elements as $\bold{x}_{k}^{m} = \big[a_{1}^{k},a_{2}^k,\cdots,a_{m}^k, x_{m+1}^k,x_{m+2}^k,\cdots,x_N^k \big]^T$.  The $j^{\text{th}}$ element of the set $\mathcal{X}$ is represented as $\{ \mathcal{X} \}_j$. If the $m^{\text{th}}$ element of $\bold{x}_{k}^{m}$ takes a value $\{ \mathcal{X} \}_j$, we represent such partial vector as $\bold{x}_{k}^{mj}$.\\

\indent The rest of this paper is organized as follows. We define the problem $H$ in Section \ref{Prob}. We first show in Section \ref{Cmdp} that the CMDP framework typically used in RL belongs to $H$. In Section \ref{TF} we describe the theoretical foundations that lead to the proposed msDP algorithm. We describe the proposed algorithm and evaluate its worst-case computational complexity in Section \ref{Algo}. We compare the performance and computational complexity of the proposed method with exhaustive search (ES) and the heuristic algorithm in Section \ref{Sim}, followed by the conclusions in Section \ref{Conc}.

\section{Problem Setup}
\label{Prob}
We define $H$ as stated below, where $\bold{x}^*$ is the optimal solution to $\eqref{cdo_1}$ if it exists. 
\begin{equation}\label{cdo_1}
\begin{split}
&\underbrace{\text{max}}_{\substack{\bold{x}}} f(\bold{x}),\\
\text{ such that }&c_i(\bold{x}) \le \alpha_i;\text{ for }1 \le i \le Q_{I},\\
&h_j(\bold{x}) = \beta_j;\text{ for }1 \le j \le Q_E,
\end{split}
\end{equation}
where $\bold{x} = [x_1, x_2,\cdots,x_N]^T \in \mathbb{R}^N$. The OF $f(\bold{x})$ satisfies the PO without the constraint functions $c_i(\bold{x})$ and $h_j(\bold{x})$, where $\alpha_i, \beta_j \in \mathbb{R}, \forall i,j$ \cite{Bellman}. Thus, it can be written in the form $f(\bold{x}) = \sum_{i=1}^{N} b_i \phi_i(x_i)$ where $b_i \in \mathbb{R}$ are constants, $x_i \in \mathcal{X}$ can only take values from the set $\mathcal{X}$ whose cardinality is $M$, and, $\phi_i:\mathcal{X} \longrightarrow \mathcal{Y},\text{ for } 1\le i \le N$. The mapping $\phi_i$ need not be known in closed form. The constraint functions $c_i(\bold{x})$ and $h_j(\bold{x})$ are not limited to linear mappings in $x_i$, nor convex and need not satisfy LICQ \cite{Nocedal}. The terms $Q_{I}$ and $Q_E$ represent the number of inequality and equality constraints, respectively.

\section{Finite Horizon CMDP as problem $H$}
\label{Cmdp}
A finite-state Markov decision process is a 5-tuple $(X,A,P,r,\mu)$ where $X$ is the finite set of states, $A$ is the finite set of actions, $P:X \times A \times X^{'} \rightarrow [0,1]$ are the state transition probabilities $p_{x,a}(x^{'})$ that a state $x^{'}$ is attained when an action $a \in A$ is taken in state $x$. Both $x, x^{'} \in X$. A reward $r:X \times A \rightarrow \mathbb{R}$ is associated with an $a \in A$ from a state $x \in X$. The map $\mu : X \rightarrow [0,1]$ is the starting state distribution \cite{RL2}.\\ 
\indent A policy $\pi = (D_1, D_2,\cdots,D_{N-1})$ is defined as a set of decisions taken by an agent over states that maximize a certain cumulative reward. A stationary deterministic policy has a decision rule $D_i:X \rightarrow A$ that maps states to actions. In case of randomized policies, the decision rule $D_i$ is a random variable and described using state dependent distributions on possible actions. Hence, $D_i$ is described as $D_i:X \rightarrow \mathcal{P}(A)$ where $\mathcal{P}(A)$ is the probability distribution over actions defined by $q_i(a|x) = \mbox{Prob}[D_i=a|x_i]$ for every action $a \in A$.\\
\indent The cumulative reward is defined as $R = \sum_{i=1}^N \gamma^i r_i$ where $r_i$ is a reward accrued at $i^{\text{th}}$ time instance when the agent uses a policy $\pi$. The discount factor $\gamma \in [0,1]$. We can write the expected sum of discounted rewards as a function of a given policy and an initial state $x$ as $V^{\pi}(x) = \mathbb{E}[R \mid x_1=x] $\cite{RL2}, which can be rewritten as
\begin{equation}\label{cdo_3}
\begin{split}
&V^{\pi}(x) = \mathbb{E}\bigg[ \sum_{i=1}^N \gamma^i r_i\big(x_i,D_i(x_i)\big) \mid x_1=x\bigg],\\
&= \sum_{i=1}^N \gamma^i \hat{r_i}(x_i),\text{ where }\hat{r_i}(x_i) =  \sum_{a \in A} q_i(a|x_i)r_i(x_i,a),
\end{split}
\end{equation}
assuming a random stationary policy. Considering an initial distribution $\mu$ on all the possible starting states on set $X$, we can define the value of the policy as \cite{RL2}
\begin{equation}\label{cdo_5}
\begin{split}
J(\pi) &= \mathbb{E}_{x \sim \mu}\bigg[ V^{\pi}(x)\bigg] = \sum_{x \in X} \mu(x)V^{\pi}(x).\\
&= \sum_{x \in X} \mu(x) \sum_{i=1}^N \gamma^i \hat{r_i}(x_i) = \sum_{i=1}^N \gamma^i \phi_i(x_i),\\
\end{split}
\end{equation}
where $\phi_i(x_i) = \sum_{x \in X} \mu(x)\hat{r_i}(x_i)$, which is obtained by substituting $V^{\pi}(x)$ from \eqref{cdo_3}. We see that with $b_i = \gamma^i$ the OF $J(\pi)$ of \eqref{cdo_5} is similar to the OF $f(\bold{x}) = \sum_{i=1}^{N} b_i \phi_i(x_i)$ of $H$ in \eqref{cdo_1}.\\
\indent For the CMDP case, we have constraint functions that could be either secondary reward functions or penalty functions. The constraint functions $c$ defined as $c:X \times A \rightarrow \mathbb{R}$ such that $c^{\pi}(x) \le d$. The whole optimization problem is written as \cite{RL2}
\begin{equation}\label{cdo_6}
\underbrace{\text{max}}_{\substack{\pi}} J(\pi),\text{ such that }c(\pi) \le d.
\end{equation}
In \eqref{cdo_6}, $c(\pi) = \mathbb{E}[c^{\pi}(x)] = \mathbb{E} \big[ \sum_{i=1}^N \gamma^i c_i(x_i,a_i) \big]$.

\section{Theoretical Foundations of msDP}\label{TF}
In this section, we construct the theoretical foundations for the proposed msDP. We first define the Constraint Satisfaction Function (CSF).
We define a boolean function $A:\bold{x} \rightarrow \{0,1\}$ such that
\begin{equation}\label{cdo_8}
    A(\bold{x})= 
\begin{cases}
    1,& \text{if } \bold{x} \text{ satisfies all the constraints }\\ 
    &c_i(\bold{x}), h_j(\bold{x}) \text{ of H; for all } i,j.\\
    0,              & \text{otherwise.}
\end{cases}
\end{equation}
We also define $A(\cdot)$ on a partially assigned vector $\bold{x}_{k}^{m}$ as $A(\bold{x}_{k}^{m}) = 1$ for $1 \le m < N$ if there exists at least one vector ${\bold{b}_k} = [b_{m+1}^k,b_{m+2}^k,\cdots,b_{N}^k]^T$ such that $b_i \in \mathcal{X}$ for $i > m$ that satisfies all the constraints. This is represented as
\begin{equation}\label{cdo_9}
    A(\bold{x}_{k}^{m})= 
\begin{cases}
    1,& \text{if there exists at least one } \bold{b}_k \text{ defined}\\
    &\text{ above, that satisfies all the constraints }\\ 
    &c_i(\bold{x}_{k}^{m}), h_j(\bold{x}_{k}^{m}) \text{ for all } i,j.\\
    0,              & \text{otherwise.}
\end{cases}
\end{equation}

\subsection{Principle of Optimality}
We define a partially computed OF of $H$ as $\lambda(\bold{x}_{k}^{m}) = \sum_{i=1}^{m} b_i \phi_i(a_i)$ for $1 \le m < N$. It is to be noted that $A(\bold{x}) = A(\bold{x}^N)$ and $\lambda(\bold{x}^{N}) = f(\bold{x})$. Here $\bold{x}^N$ is a completely known vector.\\
\indent Now we show that $H$ does not satisfy the PO. We then introduce a multi-valued function $\mbox{max}_l\{\cdot\}$ that returns $l$ maximum values from its domain. Adopting $\mbox{max}_l\{\cdot\}$, we reformulate $H$ to satisfy the PO.\\

\newtheorem{thrm}{Theorem}
\begin{thrm}\label{thrm1}
If $\bold{x}^*$ is the optimal solution to $H$ described in \eqref{cdo_1} and if there exists at least one infeasible solution $\bold{x}_1$ to $H$ such that $f(\bold{x}_1) > f(\bold{x}^*)$ with constraint satisfaction function being true at some intermediate stage $l$ in the recursion $A(\bold{x}_1^l) > 0$, then it can be shown that $J = \underbrace{\max}_{\bold{x},A(\bold{x}) > 0}f(\bold{x})$ does not obey the PO.
\end{thrm}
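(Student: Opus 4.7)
The plan is to argue by contradiction. Suppose, for the sake of contradiction, that $J$ satisfies Bellman's principle of optimality. Then $J$ admits a standard stage-wise decomposition whereby, at each intermediate stage $m$, the optimal prefix $\bold{x}^{*m}$ of the global optimum must maximise the partial objective $\lambda(\bold{x}^{m})$ over all partial vectors satisfying $A(\bold{x}^{m}) > 0$. Operationally, PO licenses a DP recursion that, at every stage, keeps only the $A$-feasible partial prefix(es) with the largest running $\lambda$-value and discards the rest; this is precisely the pruning rule that I must show is destructive under the hypothesis.

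Next I would exhibit a partial vector that survives this pruning yet corresponds to a globally infeasible completion. By hypothesis $\bold{x}_1$ is infeasible for $H$ but satisfies $A(\bold{x}_1^{l}) > 0$ at stage $l$, so $\bold{x}_1^{l}$ is a legitimate candidate of the recursion at stage $l$; similarly, every prefix $\bold{x}^{*m}$ of the true optimum is $A$-feasible. Because $f(\bold{x}_1) > f(\bold{x}^{*})$ and both totals decompose as sums of per-coordinate contributions $b_i \phi_i(x_i)$, there must exist at least one stage $m \ge l$ at which the partial inequality $\lambda(\bold{x}_1^{m}) > \lambda(\bold{x}^{*m})$ already holds; the cleanest choice is $m = N$, where the inequality is just $f(\bold{x}_1) > f(\bold{x}^{*})$ read back through $\lambda(\bold{x}^{N}) = f(\bold{x})$, but any earlier stage where the running gap is already positive will do.

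I would then close the contradiction. Since the PO-induced recursion favours $\bold{x}_1^{m}$ over $\bold{x}^{*m}$ at that stage, it either propagates $\bold{x}_1^{m}$ onward and eventually outputs a vector violating the global constraints (hence inadmissible in the definition $J = \max_{\bold{x},\, A(\bold{x})>0} f(\bold{x})$), or its terminal feasibility test discards $\bold{x}_1$ together with the already-pruned prefix $\bold{x}^{*m}$, leaving only strictly suboptimal feasible completions. Neither outcome reproduces $\bold{x}^{*}$, contradicting the premise that $J$ obeys PO.

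The main obstacle I foresee is the mismatch between the stage at which $\bold{x}_1$ is known to be $A$-feasible and the stage at which its $\lambda$-value is known to dominate. The hypothesis gives $A$-feasibility only at the single intermediate stage $l$ and $\lambda$-domination only globally, so I expect the delicate step to be showing that $\bold{x}_1^{m}$ persists in the recursion's candidate pool until the $\lambda$-gap becomes visible. The cleanest resolution I anticipate is to take the comparison at $m = N$, where the $\lambda$-inequality is the hypothesis verbatim, and to invoke the separability $f(\bold{x}) = \sum_{i=1}^{N} b_i \phi_i(x_i)$ together with the intermediate feasibility $A(\bold{x}_1^{l}) > 0$ to guarantee that the prefix $\bold{x}_1^{l}$ is never discarded before the PO recursion is forced to compare the two prefixes.
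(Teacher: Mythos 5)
Your overall strategy is the same as the paper's: assume the single\hyp{}survivor recursion of \eqref{cdo_10} is valid, show that the prefix of the infeasible $\bold{x}_1$ displaces the prefix of $\bold{x}^{*}$ at some stage, and conclude that once the infeasibility of $\bold{x}_1$ surfaces the recursion can only terminate on some third solution $\bold{x}_2 \ne \bold{x}^{*}$, contradicting $J=f(\bold{x}^{*})$. Your closing dichotomy matches the paper's equations \eqref{cdo_14}.

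However, there is a genuine gap in how you resolve the step you yourself flag as delicate. You propose to take the comparison stage $m=N$ because there the inequality $\lambda(\bold{x}_1^{N})>\lambda(\bold{x}^{*N})$ is the hypothesis $f(\bold{x}_1)>f(\bold{x}^{*})$ verbatim. But at $m=N$ the constraint filter is $A(\bold{x}^{N})=A(\bold{x})$, and by assumption $A(\bold{x}_1)=0$, so $\bold{x}_1$ is excluded from the maximization at the final stage and the comparison you want never occurs inside the recursion. Worse, if the $\lambda$\hyp{}domination only materializes at stage $N$, then $\bold{x}^{*}$'s prefix was never displaced at any intermediate stage, the terminal feasibility test removes $\bold{x}_1$, and the recursion correctly returns $\bold{x}^{*}$ --- no contradiction. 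What the argument actually requires, and what the paper uses, is that the domination $\lambda(\bold{x}_1^{l})>\lambda(\bold{x}^{*l})$ occurs at an \emph{intermediate} stage $l<N$ at which $A(\bold{x}_1^{l})>0$ still holds, i.e., while the infeasibility of $\bold{x}_1$ is not yet detectable; only then does the single\hyp{}survivor max drop $\bold{x}^{*l}$, and only then is the damage irreversible when $A(\bold{x}_1^{r})=0$ is discovered at a later stage $r$. The paper asserts this intermediate\hyp{}stage domination as part of the scenario (``a consequence of no conditions placed on the constraints''); your proof needs to assume or establish it at stage $l$ rather than deferring it to $m=N$, where the argument collapses.
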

\begin{proof}
We say that \eqref{cdo_1} satisfies the PO if we can express it as a value function \cite{Bellman}. That is, given
\begin{equation}\label{cdo_10}
\begin{split}
J &= f(\bold{x}^*) = J(x_1) = \underbrace{\text{max}}_{\substack{\{x_i\}_{i=1}^N,\\A(\bold{x}) > 0}}\Big\{\sum_{i=1}^{N} b_i \phi_i(x_i)\Big\},\text{ then}\\
J(x_i) &= \underbrace{\text{max}}_{\substack{x_i;\\A(\bold{x}^{i})>0}}\Bigg\{ b_i\phi_i(x_i) +  \underbrace{\text{max}}_{\substack{\{x_j\}_{j={i+1}}^N;\\{A(\bold{x}^{i+1}) > 0}}} \sum_{j={i+1}}^{N} b_j \phi_i(x_j) \Bigg\},\\
J(x_i) &= \underbrace{\text{max}}_{\substack x_i;A(\bold{x}^{i})>0}\Bigg\{ b_i\phi_i(x_i) +  J(x_{i+1}) \Bigg\}.
\end{split}
\end{equation}
\indent Let $\bold{x}_1$ be an infeasible solution to $H$ such that 
\begin{equation}\label{cdo_10a}
\begin{split}
f(\bold{x}_1) &> f(\bold{x}^{*}),\text{ where}\\
A(\bold{x}_1) &= 0\text{ and }A(\bold{x}^*) > 0.
\end{split}
\end{equation}
However, at some intermediate stage $l < N$ in the recursion, the condition depicted below would arise as a consequence of no conditions placed on the constraints \cite{Nocedal, Boyd}.\\
As $\lambda(\bold{x}_1^l) > \lambda(\bold{x}^{*l}),\text{ such that }A(\bold{x}_1^l) > 0\text{ and }A(\bold{x}^{*l}) > 0: \lambda(\bold{x}_1^l) = \underbrace{\text{max}}_{\substack x_l;A(\bold{x}^{l})>0}\bigg\{ \lambda(\bold{x}_1^l), \lambda(\bold{x}^{*l}), \cdots \bigg\}$.
Thus, we can write the recursion at stage $l$ in \eqref{cdo_10} as $J = \lambda(\bold{x}_1^l) +  J(x_{l+1})$.
As a result, we see that the optimal solution $\bold{x}^{*l}$ is dropped against $\bold{x}_1^{l}$ at stage $l$. However, in the subsequent stage $r$, where $l < r \le N$, another candidate solution $\bold{x}_2^r$ is picked against $\bold{x}_1^r$ because the infeasibility of $\bold{x}_1^{r}$ shows up as $A(\bold{x}_2^r) > 0,\text{ and }A(\bold{x}_1^r) = 0$.
Consequently, $\bold{x}_1^{r}$ is dropped and we have the recursion equation at stage $r \le N$ as
\begin{equation}\label{cdo_14}
\begin{split}
J &= \lambda(\bold{x}_2^r) +  J(x_{r+1}).
\end{split}
\end{equation}
Extending the result to stage $N$, we can see that 
\begin{equation}\label{cdo_14}
\begin{split}
J &= f(\bold{x}_2) = \lambda(\bold{x}_2^N) = \underbrace{\text{max}}_{\substack{\bold{x};A(\bold{x}) > 0}}\sum_{i=1}^{N} b_i\phi_i(x_i).
\end{split}
\end{equation}
However, by definition we have
\begin{equation}\label{cdo_14}
J = f(\bold{x}^*) = \lambda(\bold{x}^{*N}) = \underbrace{\text{max}}_{\substack{\bold{x};A(\bold{x}) > 0}}\sum_{i=1}^{N} b_i\phi_i(x_i),
\end{equation}
where $\bold{x}^{*} \ne \bold{x}_2$, hence, $H$ defined in \eqref{cdo_1} does not satisfy the PO.
\end{proof}
We now define a multi-valued function $\mbox{max}_{N_e}:\mathbb{R}^m \rightarrow \mathbb{R}^{N_e}$ such that it returns at-most $N_e$ maximum values over a combination of $m$ variables in its domain. We now rewrite $H$ in \eqref{cdo_1} using $\Gamma$ as defined below
\begin{equation}\label{cdo_15}
\begin{split}
&\underbrace{\text{max}}_{\substack {\bold{x}};A(\bold{x})>0} f(\bold{x}) = J = \text{max}\Big\{ \Gamma(x_1) \Big\},\text{ where}\\
&\Gamma(x_1) = \boldsymbol{\lambda}(\bold{\vec{x}}) = \underbrace{\mbox{max}_{N_e}}_{\substack \{x_j\}_{j=1}^{N};A(\bold{\vec{x}})>0}\Big\{\sum_{i=1}^{N} b_i \phi_i(x_i)\Big\},
\end{split}
\end{equation}
where $\boldsymbol{\lambda}(\bold{\vec{x}})$ is a vector of $N_e$ maximum values. That is, $\boldsymbol{\lambda}(\bold{\vec{x}}) = [\lambda(\bold{x}_1), \lambda(\bold{x}_2),\cdots,\lambda(\bold{x}_{N_e})]^T$ that corresponds to the $N_e$ solutions $\bold{\vec{x}} = [\bold{x}_1,\bold{x}_2,\cdots,\bold{x}_{N_e}]^T$. It is also to be noted that the CSF $A(\bold{\vec{x}})>0$ is true iff $A(\bold{x}_i)>0$ is satisfied for all  $\bold{x}_i \in \bold{\vec{x}}$.\\
\indent We also define the partially evaluated objective at stage $m \le N$ as $\boldsymbol{\lambda}(\bold{\vec{x}}^m) = [\lambda(\bold{x}_1^m), \lambda(\bold{x}_2^m),\cdots,\lambda(\bold{x}_{N_e}^m)]^T = \underbrace{\text{max}_{N_e}}_{\substack \{x_j\}_{j=1}^{m};A(\bold{\vec{x}}^m)>0}\Big\{\sum_{i=1}^{m} b_i \phi_i(x_i)\Big\}$. As before, the CSF  $A(\bold{\vec{x}}^m)>0$ is true iff $A(\bold{x}_i^m)>0$ is satisfied for all  $\bold{x}_i^m \in \bold{\vec{x}}^m$.\\
Given the optimal solution $\bold{x}^*$, we say that the value function $\Gamma(\cdot)$ in \eqref{cdo_15} satisfies the PO if we can write 
\begin{equation}\label{cdo_16}
\begin{split}
\Gamma(x_i) &= \underbrace{\mbox{max}_{N_e}}_{\substack x_i;A(\bold{\vec{x}}^{i})>0}\Big\{b_i\phi_i(x_i) + \Gamma(x_{i+1})\Big\},\text{ where}\\
J &= f(\bold{x}^*) = \text{max}\Big\{ \Gamma(x_1) \Big\}.
\end{split}
\end{equation}
That is, the optimal solution is maintained at every stage of the recursion in $\Gamma(x_i)$.
\begin{thrm}\label{thrm2}
If $\bold{x}^*$ is the optimal solution to $H$ described in \eqref{cdo_1}, then with an appropriate choice of $N_e$ that is dependent on the constraint satisfaction function $A(\cdot)$, it can be shown that $\Gamma(x_1) = \underbrace{\max_{N_e}}_{\substack \{x_j\}_{j=1}^{N};A(\bold{\vec{x}})>0}\Big\{\sum_{i=1}^{N} b_i \phi_i(x_i)\Big\}$ always satisfies the PO.
\end{thrm}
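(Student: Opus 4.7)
The plan is to prove this by induction on the stage index $m$, showing that if $N_e$ is chosen large enough (with the bound dictated by $A(\cdot)$), then the partial optimum $\bold{x}^{*m}$ is never evicted from the survivor list $\bold{\vec{x}}^m$, so the recursion \eqref{cdo_16} is faithful. Theorem \ref{thrm1} already exposed the sole failure mode of standard DP: a partial vector $\bold{x}_k^m$ may satisfy $A(\bold{x}_k^m) > 0$ and $\lambda(\bold{x}_k^m) > \lambda(\bold{x}^{*m})$ even though its eventual completion is either infeasible or suboptimal. Call such $\bold{x}_k^m$ \emph{deceptive} at stage $m$ and set
\[
S_m := \big\{\bold{x}_k^m \;:\; A(\bold{x}_k^m) > 0,\ \lambda(\bold{x}_k^m) > \lambda(\bold{x}^{*m})\big\}.
\]
Because $\mathcal{X}$ is finite, $|S_m| \le M^m$ is finite, and the entire family $\{S_m\}_{m=1}^{N}$ is determined by $A(\cdot)$ together with the OF weights, which realises the required "dependence on $A$" in the statement.

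Next I would set $N_e \ge N_e^\star := 1 + \max_{1 \le m \le N} |S_m|$ and induct. For the base case $m = 1$, $\mbox{max}_{N_e}\{\cdot\}$ returns the top $N_e$ admissible values; since $|S_1| < N_e$, strictly fewer than $N_e$ admissible candidates beat $\lambda(\bold{x}^{*1})$, so $\bold{x}^{*1} \in \bold{\vec{x}}^1$. For the inductive step, assume $\bold{x}^{*m} \in \bold{\vec{x}}^m$. Extending every survivor by one coordinate, filtering by $A(\cdot) > 0$, and applying $\mbox{max}_{N_e}$ produces $\bold{\vec{x}}^{m+1}$; the extension of $\bold{x}^{*m}$ that agrees with $\bold{x}^*$ yields the admissible candidate $\bold{x}^{*(m+1)}$ — admissible because $A(\bold{x}^*) = 1$ forces $A(\bold{x}^{*(m+1)}) > 0$. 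At most $|S_{m+1}| < N_e$ other admissible candidates strictly dominate it in $\lambda$-value, so $\bold{x}^{*(m+1)}$ lands within the top $N_e$ and is retained. Iterating to $m = N$ leaves only feasible complete vectors in $\bold{\vec{x}}^N$ (since $A(\bold{\vec{x}}) > 0$ is required) with $\bold{x}^*$ among them; the outer $\text{max}$ in \eqref{cdo_15} then recovers $J = f(\bold{x}^*)$, and \eqref{cdo_16} holds stage-by-stage.

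The main obstacle is conceptual rather than computational. The quantity $N_e^\star$ is instance-dependent and, in the worst case, can be as large as $M^N$, collapsing msDP to exhaustive search; the theorem is therefore essentially an existence statement whose practical teeth lie in the fact that $N_e^\star$ is a function of $A(\cdot)$ alone (not of the search history), opening the door to problem-specific bounds on $|S_m|$. Two technical subtleties require care. First, a deterministic tie-breaking rule must be fixed inside $\mbox{max}_{N_e}$ so that equalities in $\lambda$-values never displace $\bold{x}^{*m}$ while a slot remains, since otherwise the count $|S_m|$ no longer controls retention. Second, one must verify that the admissibility of the inductive extension of $\bold{x}^{*m}$ is guaranteed by the partial-CSF monotonicity implied by $A(\bold{x}^*) = 1$, rather than needing to be postulated — otherwise the inductive step could fail even with $N_e$ unbounded.
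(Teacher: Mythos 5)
Your proof is correct and rests on the same core idea as the paper's --- choose $N_e$ large enough that the partial optimum $\bold{x}^{*m}$ is never evicted from the survivor list at any stage --- but you execute it quite differently, and more completely. The paper's proof is a narrative that re-runs the failure scenario of Theorem~\ref{thrm1}: it tracks the specific deceptive vector $\bold{x}_1$ and the competitor $\bold{x}_2$ through stages $l$ and $r$, asserts at the key moment that ``with the appropriate choice of $N_e$ it can be ensured that $(\bold{x}_2^r,\bold{x}^{*r})\in\bold{\vec{x}}^r$,'' and defers the quantitative justification to Lemmas~\ref{lemma3} and~\ref{lemma4}, whose proofs are omitted. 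You instead give a self-contained induction on the stage index, with an explicit sufficient bound $N_e \ge 1+\max_m|S_m|$ obtained by counting the admissible partial vectors that dominate $\lambda(\bold{x}^{*m})$; this is in the same spirit as the paper's Lemma~\ref{lemma4} (which counts all partial solutions with $A(\cdot)>0$ incident at a stage) but is in fact a tighter bound, since you count only the dominating subset. Your two flagged subtleties are both genuine and both resolve favorably: tie-breaking can be handled either by a deterministic rule or by defining $S_m$ with a non-strict inequality, and the admissibility of the inductive extension $\bold{x}^{*(m+1)}$ follows directly from the paper's definition \eqref{cdo_9} of the CSF on partial vectors, because the tail of $\bold{x}^*$ itself witnesses a feasible completion. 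The net effect is that your argument proves the theorem without leaning on the unproven lemmas, at the cost of an instance-dependent $N_e$ whose worst case degenerates to exhaustive search --- a limitation the paper shares but does not state as plainly.
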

\begin{proof}
Let $\bold{x}_1$ be an infeasible solution to $H$ as described in \eqref{cdo_10a} in Theorem \ref{thrm1}. Let the PO \eqref{cdo_16} hold true; we can then write the same at recursion $l \le N$ as
\begin{equation}\label{cdo_17}
\begin{split}
\Gamma(x_1) &= \underbrace{\mbox{max}_{N_e}}_{\substack x_l;A(\bold{\vec{x}}^l)>0}\Big\{b_l \phi_l(x_l)  + \Gamma(x_{l+1})\Big\},\\
&= \boldsymbol{\lambda}(\bold{\vec{x}}^l) + \Gamma(x_{l+1}).
\end{split}
\end{equation}
We now let $N_e$ be large enough such that $\boldsymbol{\lambda}(\bold{\vec{x}}^l) = [\lambda(\bold{x}_1^l), \lambda(\bold{x}^{*l}),\cdots]^T$. That is $(\bold{x}_1^l, \bold{x}^{*l}) \in \bold{\vec{x}}^l$.\\
\indent However, at some later stage $r$ where $l < r \le N$, we can write the recursion as
\begin{equation}\label{cdo_18}
\begin{split}
\Gamma(x_1) &= \underbrace{\mbox{max}_{N_e}}_{\substack x_r;A(\bold{\vec{x}}^r)>0}\Big\{b_r \phi_l(x_r)  + \Gamma(x_{r+1})\Big\},\\
&= \boldsymbol{\lambda}(\bold{\vec{x}}^r) + \Gamma(x_{r+1}).
\end{split}
\end{equation}
Now we note that $\lambda(\bold{x}_2^r) \in \boldsymbol{\lambda}(\bold{\vec{x}}^r)$ and $\lambda(\bold{x}_1^r) \not\in \boldsymbol{\lambda}(\bold{\vec{x}}^l)$. This is because, at this stage in the recursion, the infeasibility of the candidate solution $\bold{x}_1^r$ becomes known as indicated by $A(\bold{x}_1^r) = 0$. Thus the solution $\bold{x}_1^r$ is not part of $\bold{\vec{x}}^r$.\\
With the appropriate choice of $N_e$, it can be ensured that $(\bold{x}_2^r, \bold{x}^{*r}) \in \bold{\vec{x}}^r$. The determination of $N_e$ is discussed in Lemmas \ref{lemma3} and \ref{lemma4} in section \ref{TF}. Extending $r$ to $N$, we have $(\bold{x}_2^N, \bold{x}^{*N}) \in \bold{\vec{x}}^N$. Hence, we can write
\begin{equation}\label{cdo_19}
\begin{split}
\Gamma(x_1) &= \boldsymbol{\lambda}(\bold{\vec{x}}^N) =  \underbrace{\mbox{max}_{N_e}}_{\substack \{x_j\}_{j=1}^{N};A(\bold{\vec{x}}^N)>0}\Big\{\sum_{i=1}^{N} b_i \phi_i(x_i)\Big\},\\
&\text{with }\bold{x}^{*} \in \bold{\vec{x}}^N;\text{ Thus, }f(\bold{x}^*) = \text{max}\Big\{ \Gamma(x_1) \Big\}.
\end{split}
\end{equation}
Thus, with an appropriate choice of $N_e$, we see that the PO for \eqref{cdo_15} is always satisfied. 
\end{proof}

\subsection{Trellis for H}\label{graph}
We define a graph $G(V,E,A)$ with nodes $V$, edges $E$, and edge label $A$ as
\begin{equation}\label{mlba_4}
\begin{split}
V_i &= \Big\{ x_{ij} \mid x_{ij} = \{ \mathcal{X} \}_j; 1 \le i \le N; 1 \le j \le M \Big\} \cup S \cup F,\\
E &=\Big\{(x_{ij},x_{{i+1}m}) \mid x_{ij} = \{ \mathcal{X} \}_j; x_{{i+1}m} = \{ \mathcal{X} \}_m;\\
&1 \le j,m \le M; 1 \le i \le N-1; (S,x_{1j}),  (x_{Nm},F) \Big\},\\
A &= \Big\{ \psi_i(x_{i+1} \mid x_{i}) =  \phi_i(x_i) \leftarrow (x_{i},x_{i+1}) \mid \forall x_i \in \mathcal{X};\\
&1 \le i \le N; \phi_N(x_N) \leftarrow (x_N,F) \mid \forall x_N \in \mathcal{X}; \\
& 0  \leftarrow (S,x_1) \mid \forall x_1 \in \mathcal{X} \Big\}.
\end{split}
\end{equation}
The nodes $S$ and $F$ are the starting and terminal nodes, respectively. The optimal solution $\bold{x}^* = [x_{1j},x_{2j},\cdots,x_{Nj}]^T$ can be thought of as a walk through this graph starting at node $S$ and ending at the destination node $F$ passing through the nodes $x_{ij} \in V_i$ from stage $i=1$ to $i=N$, along the edges $(x_{ij},x_{{i+1}m}) \in E$ accruing a reward $\psi_i(x_{{i+1}m} \mid x_{ij}) = \phi_i(x_{ij}) \in A$ that results in the maximum total accumulated reward $f(\bold{x}^*)$ such that all the constraints are satisfied. That is  $A(\bold{x}^*) > 0$. It can also be noted that this graph $G(V,E,A)$ can be qualified as a trellis  $T(V,E,A)$ \cite{Kolman}. An example $T(V,E,A)$ with $M=4$ is shown in Fig \ref{Fig2_mlba}.\\
\begin{figure*}[http]
\centering
\resizebox{.7\textwidth}{!}{
\input{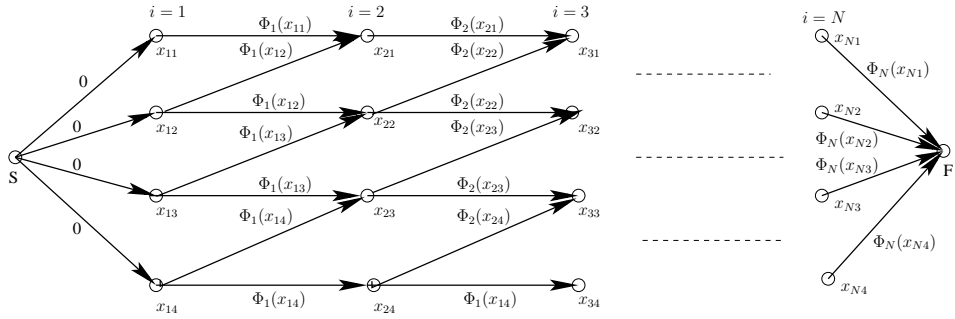}
}
\caption{An example trellis $T(V,E,A)$ for $M=4$.}
\label{Fig2_mlba}
\end{figure*}
\indent A DP approach can not be used to obtain $\bold{x}^*$ as shown using Theorem \ref{thrm1}. We note from Theorem \ref{thrm2} that the PO is satisfied by using the multi-valued function $\mbox{max}_{N_e}$. Thus, maintaining multiple survivor paths $(N_e)$ at each stage of the trellis ensures optimality if $N_e$ is picked properly. An appropriate choice of  $N_e$ is discussed in the next subsection. Thus the DP algorithm is modified to perform Add, Compare, and Multiple Select (ACMS) as basic operation at each node in the trellis as opposed to Add, Compare, and Select (ACS). The algorithm is described in section \ref{Algo}.

\subsection{Determination of $N_e$}\label{SecNe}
In the Lemmas below, we establish a relationship between the number of survivors $N_e$ and CSF $A(\cdot)$ to ensure optimality. Due to the page limitation, the proofs are omitted.
\newtheorem{lemm}{Lemma}
\begin{lemm}\label{lemma3}
The number of survivor paths (or partial solutions) that need to be maintained at a given node $x_{ij}$ in trellis $T(V,A,E)$ to obtain optimal solution to $H$ is dependent on the constraint satisfaction function $A(\bold{x}^{ij})$ at the node $x_{ij}$ in the trellis. It can be shown that 
\end{lemm}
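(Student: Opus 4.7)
The approach will be to analyze the msDP recursion node-by-node and characterize $N_e(x_{ij})$ as the rank-based minimum number of survivors needed to ensure that the optimal prefix $\bold{x}^{*ij}$ is never pruned at node $x_{ij}$. First, I would enumerate at node $x_{ij}$ all partial paths $\bold{x}_k^{ij}$ with $A(\bold{x}_k^{ij}) > 0$ (the locally feasible prefixes ending in $x_{ij}$), sort them in decreasing order of the partial objective $\lambda(\bold{x}_k^{ij})$, and let $r^*(x_{ij})$ denote the rank of the optimal prefix $\bold{x}^{*ij}$ in this sorted list. The proposed claim is $N_e(x_{ij}) \ge r^*(x_{ij})$, i.e., $N_e$ must be at least one more than the number of locally feasible prefixes whose partial objective strictly exceeds $\lambda(\bold{x}^{*ij})$.

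For necessity, I would argue by contradiction: if $N_e(x_{ij}) < r^*(x_{ij})$, then the $\mbox{max}_{N_e}$ operator in the recursion \eqref{cdo_16} would discard $\bold{x}^{*ij}$ in favor of locally feasible but higher-valued prefixes. Because the trellis $T(V,E,A)$ only propagates retained survivors forward, this loss at stage $i$ cannot be repaired at any later stage, so Theorem \ref{thrm2}'s conclusion $\bold{x}^* \in \bold{\vec{x}}^N$ would fail. For sufficiency, I would induct on the stage index $i$ along the optimal walk: if the rank bound holds at every node, then $\bold{x}^{*ij}$ is retained at every stage by construction of $\mbox{max}_{N_e}$, and the recursion \eqref{cdo_19} yields $\bold{x}^* \in \bold{\vec{x}}^N$, producing $f(\bold{x}^*) = \mbox{max}\{\Gamma(x_1)\}$.

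The key justification that this bound is not larger than necessary is that every prefix ranked above $\bold{x}^{*ij}$ is, by hypothesis, locally feasible yet must admit no globally feasible completion whose total objective exceeds $f(\bold{x}^*)$; otherwise the optimality of $\bold{x}^*$ would be contradicted. Consequently, each ``dominating'' survivor that we are forced to carry corresponds precisely to an eventually infeasible trajectory whose infeasibility is not yet detectable from $A(\bold{x}_k^{ij})$ alone. This identifies $r^*(x_{ij}) - 1$ with the count of locally feasible but globally infeasible dominating prefixes at node $x_{ij}$, making the dependence on the CSF explicit.

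The main obstacle will be expressing $r^*(x_{ij})$ in terms of data accessible at stage $i$, since the true rank depends on the unknown global optimum. The natural uniformly valid worst-case bound is $N_e(x_{ij}) \le |\{k : A(\bold{x}_k^{ij}) > 0\}|$, equal to the number of partial paths ending at $x_{ij}$ whose CSF evaluates to $1$, which is the quantity the paper's algorithm can actually compute during the forward sweep. Establishing that this loose upper bound is tight in the worst case (by constructing an instance where every locally feasible prefix but one becomes infeasible only at stage $N$) would close the Lemma, and connects directly to the CSF dependence stated in the Lemma's hypothesis.
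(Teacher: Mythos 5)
The paper explicitly omits the proof of this lemma (``Due to the page limitation, the proofs are omitted''), so there is nothing to compare your argument against line by line; what can be checked is whether your reasoning actually delivers the quantity $B(x_{ij}) = \sum_{k=1}^{L_{ij}} A(\bold{x}^{ij}_k)$ in \eqref{cdo_lem1}. Your rank-based argument does: the necessity direction (if fewer than $r^*(x_{ij})$ survivors are kept, the optimal prefix is pruned and, since only retained survivors are extended, can never be recovered) and the sufficiency induction along the optimal walk are both sound, and passing to the worst case over the unknown rank gives exactly the count of CSF-positive incident paths, which is the lemma's $B(x_{ij})$. In fact your $r^*(x_{ij})$ is a sharper per-node characterization than the lemma states, which is a genuine contribution of your route.

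There is, however, one conceptual slip you should repair. You identify the $r^*(x_{ij})-1$ dominating survivors with ``eventually infeasible trajectories whose infeasibility is not yet detectable from $A(\bold{x}_k^{ij})$ alone.'' Under the paper's definition \eqref{cdo_9}, $A(\bold{x}_k^{ij})=1$ already asserts the \emph{existence} of a feasible completion, so a locally feasible dominating prefix is not necessarily headed for infeasibility: it may be globally feasible and simply end up with a worse total objective because its best feasible tail is poor. Both kinds of dominating prefix force you to carry extra survivors, and your bound is unaffected, but the characterization as stated is wrong. The same issue infects your proposed tightness construction (``every locally feasible prefix but one becomes infeasible only at stage $N$''), which is impossible under \eqref{cdo_9}; the correct worst-case instance is one in which the optimal prefix is the \emph{lowest-ranked} of the $B(x_{ij})$ CSF-positive prefixes at some node, i.e., all $B(x_{ij})-1$ competitors dominate it in $\lambda(\cdot)$ yet every feasible completion of each competitor falls short of $f(\bold{x}^*)$. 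With that correction your argument closes the lemma.
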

\begin{equation}\label{cdo_lem1}
B(x_{ij}) \triangleq \sum_{k=1}^{L_{ij}} A(\bold{x}^{ij}_k),
\end{equation}
where $L_{ij}$ is the number of paths incident on the node $x_{ij}$, out of which $B(x_{ij})$ paths satisfy all the constraints as per the definition of CSF in \eqref{cdo_9}.

\begin{lemm}\label{lemma4}
The minimum number of maximum paths $N_e$ that need to be maintained in \eqref{cdo_16} at each stage in the recursion for optimality of $H$ can be expressed as a lower bound: 
\end{lemm}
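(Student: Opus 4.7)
The plan is to build on Theorem \ref{thrm2} and Lemma \ref{lemma3} to count how many partial paths must survive globally at each stage so that the optimum $\bold{x}^{*}$ is never evicted from $\bold{\vec{x}}^{i}$. Theorem \ref{thrm2} already reduces the question to finding an $N_e$ for which $\bold{x}^{*i} \in \bold{\vec{x}}^{i}$ at every $1 \le i \le N$, while Lemma \ref{lemma3} tells us that at an individual node $x_{ij}$ of the trellis $T(V,E,A)$, exactly $B(x_{ij})$ of the $L_{ij}$ incident partial paths are locally feasible in the sense of \eqref{cdo_9}.

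First I would argue that, in the worst case, none of the $B(x_{ij})$ locally feasible incident partial paths at $x_{ij}$ can be dropped at stage $i$ without risking loss of the optimum. The reason is the phenomenon already exploited in Theorem \ref{thrm1}: the $\lambda(\cdot)$ ordering is not the feasibility ordering, so an infeasible completion can beat the optimal one in $\lambda(\cdot)$ while remaining locally feasible at stage $i$. Consequently, the partial path that ultimately extends to $\bold{x}^{*}$ can occupy any position in the $\lambda$-ordering at $x_{ij}$, and every one of the $B(x_{ij})$ feasible incident paths must be carried forward until a downstream constraint disambiguates.

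Second, because $\mbox{max}_{N_e}$ in \eqref{cdo_15}--\eqref{cdo_16} selects survivors globally across a stage rather than per node, the total number of partial paths that must be kept alive at stage $i$ is $\sum_{j=1}^{M} B(x_{ij})$, summed across the $M$ nodes of that stage. Taking the worst stage then yields the claimed lower bound
\begin{equation}
N_e \;\ge\; \underbrace{\text{max}}_{1 \le i \le N}\;\sum_{j=1}^{M} B(x_{ij}).
\end{equation}
For tightness I would exhibit, for any $N_e$ strictly below this value, an adversarial instance of $H$ of the form guaranteed by Theorem \ref{thrm1}: at the argmax stage an infeasible path that is locally dominant in $\lambda(\cdot)$ over $\bold{x}^{*i}$ (and later turns infeasible, mirroring \eqref{cdo_10a}) crowds the optimum out of the $N_e$ retained survivors.

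The step I expect to be the main obstacle is controlling the inter-stage coupling: the set of partial paths whose deletion destroys optimality can shift from stage to stage as $A(\cdot)$ is re-evaluated along the trellis, so a naive per-stage count might double-count or miss survivors that propagate forward. I would neutralize this by observing that the recursion in \eqref{cdo_16} looks only one stage ahead, so the survivor budget required at stage $i$ is a stage-local quantity — it depends on the feasible partial paths terminating at stage $i$ alone, and is independent of how $N_e$ was allocated at stages $< i$. This reduces the whole argument to the local node-wise count of Lemma \ref{lemma3} summed over $j$ and then maximized over $i$, producing the lower bound above.
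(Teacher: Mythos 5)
The paper never actually prints a proof of Lemma~\ref{lemma4}: Section~\ref{SecNe} states that the proofs of both lemmas are omitted for space, so there is no reference argument to compare yours against. Your route is the natural one implied by the paper's own scaffolding: take the per-node count $B(x_{ij})=\sum_{k=1}^{L_{ij}}A(\bold{x}^{ij}_k)$ from Lemma~\ref{lemma3}, argue that in the worst case every locally feasible incident path must survive because the $\lambda(\cdot)$ ordering at stage $i$ need not agree with the feasibility ordering revealed downstream (the phenomenon of Theorem~\ref{thrm1}), sum over the $M$ nodes of the stage since $\mbox{max}_{N_e}$ selects survivors stage-globally, and maximize over $i$. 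This lands exactly on \eqref{cdo_lem4_3}, and the sufficiency half --- that retaining all feasible partial paths at every stage keeps $\bold{x}^{*i}\in\bold{\vec{x}}^{i}$ and hence, by Theorem~\ref{thrm2}, preserves optimality --- is sound.

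Two genuine gaps remain. First, the necessity direction is only announced, not executed: you say you \emph{would} exhibit an adversarial instance in which any smaller $N_e$ evicts the optimum, but the lemma is a statement about the \emph{minimum} number of survivors needed, so that construction is the actual content; as written you have shown that $\max_i\sum_{j}B(x_{ij})$ survivors suffice, not that fewer can fail. Second, your dismissal of the inter-stage coupling is too quick: $L_{ij}$, the number of paths incident on $x_{ij}$, is itself determined by how many survivors were retained at stage $i-1$, i.e., by $N_e$, so the right-hand side of \eqref{cdo_lem4_3} depends implicitly on the quantity being bounded. The claim that the stage-$i$ budget is ``independent of how $N_e$ was allocated at stages $<i$'' is precisely what needs proof --- for instance by showing the feasible-path count is consistent when all feasible partial paths are retained, or by defining $L_{ij}$ over the unpruned trellis and showing the bound is monotone under pruning. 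To be fair, the paper leaves both points unaddressed as well, so your sketch is no less complete than what the authors provide.
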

\begin{equation}\label{cdo_lem4_3}
N_e \ge \underbrace{\text{max}}_{\substack i} \sum_{j=1}^M \sum_{k=1}^{L_{ij}} A(\bold{x}^{ij}_k).
\end{equation}

\section{Multi-survivor Dynamic Programming Algorithm}
\label{Algo}
\begin{algorithm}[h!]
  \caption{Multi-survivor Dynamic Programming.}\label{AlgoMVA}
  \begin{algorithmic}
    \small
    \State {\bfseries Input:}{$M$,$N$,$A(\cdot)$,$\mathcal{X}$}
      \State $M\gets \text{Cardinality of sets }\mathcal{X}$
      \State $N\gets \text{Number of stages or variables of }f(\cdot)\text{ of problem H}$
      \State $A(\cdot) \gets \text{Constraint satisfaction function of H}$
      \State $\mathcal{X} \gets \text{Set of discrete values that }x_i\text{'s can take }$
      \State \text{Determine the path valency $E(x_{ir})$ for all nodes} 
      \State \text{Initialize paths and its associated costs}
      \State $\scriptstyle \bold{x}^{1,1} = \{\mathcal{X}\}_1, \bold{x}^{1,2} = \{\mathcal{X}\}_2, \cdots, \bold{x}^{1,M} = \{\mathcal{X}\}_M$
      \State $\scriptstyle \lambda(\bold{x}^{1,1})=b_1\phi_1(\{\mathcal{X}\}_1), \lambda(\bold{x}^{1,2})=b_1\phi_1(\{\mathcal{X}\}_2), \cdots,$  
      \State $\scriptstyle \lambda(\bold{x}^{1,M})=b_1\phi_1(\{\mathcal{X}\}_M)$
      \For{$i=2:N$}
           \For{$r=1:M$}
           \State{$m \gets 1$}
           	\For{$b=1:M$}
		   \If {$(x_{{i-1}b},x_{ir}) \in T(A)$}
			\For{$k=1:E(x_{ir})$}
			   \If {$A(\bold{x}_k^{ir}) > 0$}
			       \State{$x_{ir} =  \{\mathcal{X}\}_r$}
			       \State{$\bold{x}^{i,r}_k = [(\bold{x}^{i-1,b}_k)^T, x_{ir}]^T$}
    			       \State{$\lambda(\bold{x}^{i,r}_k) = \lambda(\bold{x}^{i-1,b}_k) + b_i\phi_i(x_{ir})$}
			   \EndIf
			\EndFor
		   \EndIf
		\EndFor
		\State{\text{Keep only $\{\bold{x}_m^{i,r}\}_{m=1,\cdots,E(x_{i,r})}$ paths, that }
		\State{$\text{ correspond to }E(x_{ir})$ maximum values of} 
		\State{$\scriptstyle \Big\{\lambda(\bold{x}_m^{i,r})\Big\}_{m=1,\cdots,E(x_{ir})}$ and ignore the rest!}}
           \EndFor
     \EndFor
     \State{Process the termial node "F"}
     \State{$m \gets 1$}
     \For{$b=1:M$}
		\For{$k=1:L_{T}\text{ All paths incident on terminal node F}$}
			\State{$\scriptstyle \lambda(\bold{x}_m^F) = \lambda(\bold{x}^{N-1,b}_k)+b_N\phi_N(x_{Nb})$}
			\State{$\scriptstyle m \gets m+1$}
		\EndFor
     \EndFor
     \State{\text{Pick $\bold{x}^*$ that has maximum of $\Big\{\lambda(\bold{x}^F_k)\Big\}_{k=1,\cdots,m-1}$ }}
     \State{$\textbf{return }\bold{x}^* \text{ Optimal Solution}$}
  \end{algorithmic}
\end{algorithm}

\subsection{Evaluation of CSF}\label{const}
It can be shown that the $A(\bold{x}^m)$ can be evaluated at any intermediate stage $m \le N$ by posing it as an unconstrained discrete optimization (UDO) problem. The UDO problems can be solved in polynomial time using graph structures like Depth-First-Search (DFS), Breadth-First-Search (BFS), or BB algorithms \cite{Bbound, Ndale}.

\subsection{Computational Complexity Analysis}
\label{CCA}
The computational blocks of the msDP involve (i) evaluating the CSF $A(x^{ij}_k)$, and (ii) performing the ACMS operation on the $E(x_{ij})$ paths incident on a given node $x_{ij}$ of the trellis. For (i), a BFS or DFS can be used in such cases which has complexity of $O(E + V)$, where $E$ and $V$ are the numbers of edges and vertices of the trellis \cite{Ndale}. In the worst case of a fully connected trellis, $E = (N-1)M^2,$ and $V = NM$. Also, in the worst case scenario of $N_e^{'} = \frac{N_e}{M}$ incident on all the nodes in the given trellis, the net complexity of CSF evaluation $T_{CSF}$ can be expressed as
\begin{equation}\label{cdo_cca1}
T_{CSF} = N_e\Big((N-1)NM^2+N^2M\Big)\text{ for all the }NM\text{ nodes}.
\end{equation}
In case of (ii), the total number of ACMS evaluations for the trellis in the worst case is $T_{ACMS} = \frac{N_e}{M}NM = N_eN$. Thus, the complexity of msDP is $O(N_eN^2M^2)$ or in general $O(N_eN^{l})$, where $l \in \mathbb{R}$. On the other hand, the ES requires an evaluation of the CSF and OF for $M^N$ solutions. Hence its complexity is $O(M^N)$. In the case of learning techniques like RL, to guarantee an optimal solution, the complexity required is $\approx O(M^N)$ \cite{Watkins}.

\section{Simulations} 
\label{Sim}
In this section, we apply the proposed msDP to solve (i) Analog to Digital Converter (ADC) Bit-Allocation (BA) problem for MaMIMO receiver, and (ii) DNA fragment assembly (DFA) problem that is a part of DNA sequencing \cite{Maria,DFA2}. We evaluate the performance and compare the number of computations\footnote{Number of computations = Actual number of CSF evaluations + ACMS operations.} against the ES and the heuristic suboptimal simulated annealing (SA) algorithms\footnote{The optimality is compromised against complexity with heuristic algorithms like SA.} \cite{SimAn2}.

\subsection{ADC Bit Allocation for MaMIMO}
The ADC BA problem is to assign the number of bits to be used by Variable-Resolution ADCs on different Radio Frequency (RF) paths of the MaMIMO receivers. An optimal BA ensures that the performance, of the receiver is maximized under a non-linear power constraint. It is to be noted that the OF is non-linear. In \cite{Zakir2}, the authors reduce this to a problem in $H$, which is described as
\begin{equation}\label{cdo_sim_ba1}
J = \underbrace{\text{max}}_{\substack{\{x_i\}_{i=1}^N;A(\bold{x}) > 0}}\Big\{\sum_{i=1}^{N} \frac{a_i^2}{b_i^2+d_i2^{x_i}}\Big\},
\end{equation}
where $a_i$, $b_i$, and $d_i$ are constants $\in \mathbb{R}$ that represent channel singular value, noise power, and coefficient of quantization noise due to bit allocation $x_i$ on the $i^{th}$ RF path, respectively. Here $N$ is the number of RF paths in the receiver. The CSF $A(\bold{x}) > 0$ iff the power constraint $\sum_{i=1}^{N} 2^{x_i} \le P_t$, and bit-ordering  constraint $x_1 \ge x_2  \ge \cdots  \ge x_N$ are satisfied. The total ADC power budget is $P_t$. Hence we have
\begin{equation}\label{cdo_sim3}
    A(\bold{x}) = 
\begin{cases}
    1,\text{ if } &\sum_{i=1}^{N} 2^{x_i} \le P_t,\\
       &x_1 \ge x_2  \ge \cdots  \ge x_N.\\
    0, &\text{ Otherwise}.\\
\end{cases}
\end{equation}
We perform simulations considering the parameters used by the authors in \cite{Zakir2}. They consider a maximum bit resolution of $M=4$, such that $x_i$'s (bits) take values from the set $\mathcal{X} = \{1,2,3,4\}$ for $1 \le i \le N$. We assume the number of RF paths in the receiver as $N=12$ and the power budget of $P_t=48$. For this case, we achieve $N_e \ge 13$. The number of computations with msDP, ES, and SA algorithms are shown in Table \ref{tab2}. Both the brute-force and msDP methods obtain the optimal solution of $\bold{x}^* = [4,2,2,2,2,2,1,1,1,1,1,1]^T$ \cite{Zakir5}.
 
\subsection{DNA Fragment Assembly}
We apply our proposed formalism to the DFA problem, which has equivalence to the TSP \cite{Maria, boundTSP}. DFA is the challenging process of the DNA sequencing \cite{Jay}. DNA sequencing's main problem is that the current technology can not read an entire genome in one shot, sometimes not even more than 1000 bases. Even the simplest organisms, like bacteria and viruses, have much longer genome lengths. Consequently, the genomes are broken down into smaller readable fragments and sequenced \cite{Sanger}. In this step, $N$ copies of DNA are created.  A short fragment is derived from each of the replicated DNA at some random location. These short fragments are sequenced. The final and challenging step is to assemble these sequenced fragments to obtain the original DNA sequence. This step is called the DFA, which is illustrated through an example below \cite{Maria}.\\
\indent We assume the DNA sequence to be $TTACCGTGC$, and the fragments sequenced using 4 DNA copies being $F_1 = ACCGT$, $F_2 = CGTGC$, $F_3 = TTAC$, and $F_4 = TACCGT$. The overlap of each fragment with the other three fragments are computed using similarity measure. Based on this similarity measure, the order of fragments are determined which in the case of this example is $F_3F_4F_1F_2$.\\
\indent The DFA problem is posed as a maximization problem where the sum of the similarity measures\footnote{The similarity scores between different pairs of fragments are derived using the Smith-Waterman algorithm \cite{Maria, Smith}.} between two adjacent DNA fragments is maximized. This is subject to the constraint that there is no repetition of the fragments in the sequence. Formally, this problem is defined as \cite{Maria}
\begin{equation}\label{cdo_sim1}
J = \underbrace{\text{max}}_{\substack{\{F_{\sigma_i}\}_{i=1}^N;A(\mathcal{F}) > 0}}\Big\{\sum_{i=1}^{N-1} \phi(F_{\sigma_i},F_{\sigma_{i+1}})\Big\},
\end{equation}
where $\mathcal{F} = \{ F_{\sigma_1}, F_{\sigma_2},\cdots,F_{\sigma_N} \}$ is the optimal solution indicating the original DNA sequence, $\sigma_i$ is the fragment index, and $\phi(F_{\sigma_i},F_{\sigma_{i+1}})$ is the similarity measure between the fragments $F_{\sigma_i}$ and $F_{\sigma_{i+1}}$. Here, the set $\mathcal{X}$ is collection of DNA fragments $\{F_{\sigma_j}\}_{j=1}^{N}$, where $N$ is the number of fragments. In this example we have $M=N$. For this problem the CSF $A:\mathcal{F} \rightarrow \{0,1\}$ is defined on the set $\mathcal{F}$ as $A(\mathcal{F}) > 0$ iff all the fragments in $\mathcal{F}$ are unique. The CSF denoted as
\begin{equation}\label{cdo_sim3}
    A(\mathcal{F}) = 
\begin{cases}
    1,&\text{ if }\bigcap\limits_{i=1}^{N} F_{\sigma_i} = \varnothing,\\ 
    0, &\text{ Otherwise}.\\
\end{cases}
\end{equation} 
We perform the DFA of a small section of the DNA sequence of bacterium \textit{Escherihia Coli (E. coli)} \cite{Maria}. The original section of the DNA is represented as  $TACTAGCAATACGCTTGCGTTCGGT$. We consider $N=10$ fragments each with 8 bases, as follows: $F_1 = \textit{ACGCTTGC}$, $F_2 = \textit{TTGCGTTC}$, $F_3 = \textit{ACTAGCAA}$, $F_4 = \textit{CGTTCGGT}$, $F_5 = \textit{AGCAATAC}$, $F_6 = \textit{TACTAGCA}$, $F_7 = \textit{AATACGCT}$, $F_8 = \textit{CTTGCGTT}$, $F_9 = \textit{ATACGCTT}$, and $F_{10} = \textit{CTAGCAAT}$.
Using the proposed method, we obtain the optimal solution $\mathcal{F} = \{F_6 F_3 F_{10} F_5 F_7 F_9 F_1 F_8 F_2 F_4 \}$. It can be shown that by imposing a second constraint using an upper bound \cite{boundTSP}, we achieve $N_e \ge 454$. Solving the DFA problem to optimality with the imposition of these additional constraints is not possible using the traditional DP. The results are shown in Table \ref{tab3}.

\begin{table}[t]
\caption{Comparison of the total actual number of computations $^\mathsection$ required for msDP, ES, and SA algorithms.}
\label{tab2}
\begin{center}
\begin{sc}
\resizebox{\columnwidth}{!}{%
\begin{tabular}{ |c|c|c|c|}
\hline
& \small Proposed & \small Exhaustive & \small Simulated \\
& \small Method  & \small Search & \small annealing \\ 
& \tiny $O(N_eN^2M^2)$  & \tiny $O(M^N),O(N!)$ & \tiny $\approx O(M^N),O(N!)^{\ddagger}$ \\
\hline
\small ADC BA  & \small 6912 $(N_e = 13)$ & \small $16.7 \times 10^6$ & \small 5220 \\
\hline
\small DFA-1{$^{\dagger}$} & \small 83890 $(N_e = 434)$ & \small $3.62 \times10^6$ & \small 380 \\
\hline
\end{tabular}%
}
\end{sc}
\tiny{$^{\dagger}$} DFA with a second constraint. \tiny{$\mathsection$ Number of computations = Actual number of CSF evaluations + ACMS operations}. \tiny{$^{\ddagger}$} To guarantee optimal solution.
\end{center}
\vskip -0.1in
\end{table}

\begin{table}[h!]
\caption{Comparison of performance of the msDP, ES, and SA algorithms.}
\label{tab3}
\begin{center}
\begin{sc}
\resizebox{\columnwidth}{!}{%
\begin{tabular}{ |c|c|c|c|}
\hline
& \small Proposed & \small Exhaustive & \small Simulated \\
& \small Algorithm  & \small Search & \small annealing\\ 
& \small \textbf{\textcolor{blue}{Optimal}}  & \small \textbf{\textcolor{blue}{Optimal}} & \small \textbf{\textcolor{red}{Suboptimal}} \\ 
\hline
\small ADC BA  & \small \textcolor{blue}{$[4,2,2,2,2,2,1,1,1,1,1,1]^T$}  & \small \textcolor{blue}{$[4,2,2,2,2,2,1,1,1,1,1,1]^T$} & \small \textcolor{red}{$[4,4,2,2,1,1,1,1,1,1,1,1]^T$} \\
\hline
\small DFA-1 \small{$^{\dagger}$} & \small \textcolor{blue}{$[6,3,10,5,7,9,1,8,2,4]^T$} & \small \textcolor{blue}{$[6,3,10,5,7,9,1,8,2,4]^T$} & \small \textcolor{red}{$[6,3,10,5,2,8,4,7,9,1]^T$} \\
\hline
\end{tabular}%
}
\end{sc}
\end{center}
\vskip -0.1in
\end{table}

\section{Conclusion} 
\label{Conc}
Constrained discrete optimization problems with the objective function that satisfies the optimality principle and place no conditions on the constraints are NP-Hard. In general, such problems arise in many areas of science, engineering, finance, and management. We propose a multi-survivor dynamic programming framework that solves these problems to optimality with worst-case complexity reduction from either $O(K^N)$ or $O(N!)$ to $O(N_eN^l)$, where $N_e$ and $l$ are constants that depend on the constraints.

\section*{Acknowledgment}
The authors would like to thank National Instruments for the financial support extended for this work.  
\bibliographystyle{IEEEtran}
\bibliography{LatCom_BibTexFile}

\begin{thebibliography}{10}
\providecommand{\url}[1]{#1}
\csname url@samestyle\endcsname
\providecommand{\newblock}{\relax}
\providecommand{\bibinfo}[2]{#2}
\providecommand{\BIBentrySTDinterwordspacing}{\spaceskip=0pt\relax}
\providecommand{\BIBentryALTinterwordstretchfactor}{4}
\providecommand{\BIBentryALTinterwordspacing}{\spaceskip=\fontdimen2\font plus
\BIBentryALTinterwordstretchfactor\fontdimen3\font minus
  \fontdimen4\font\relax}
\providecommand{\BIBforeignlanguage}[2]{{%
\expandafter\ifx\csname l@#1\endcsname\relax
\typeout{** WARNING: IEEEtran.bst: No hyphenation pattern has been}%
\typeout{** loaded for the language `#1'. Using the pattern for}%
\typeout{** the default language instead.}%
\else
\language=\csname l@#1\endcsname
\fi
#2}}
\providecommand{\BIBdecl}{\relax}
\BIBdecl

\bibitem{Bellman}
R.~Bellman, ``The theory of dynamic programming,'' \emph{Bulletin of the
  American Mathematical Society}, vol.~60, no.~6, pp. 503--515, Nov. 1954.

\bibitem{Nocedal}
W.~{Nocedal}, ``Numerical optimization,'' \emph{Springer, 2nd Edition}, 2006.

\bibitem{NPHard}
C.~H. Papadimitriou, ``On the complexity of integer programming,'' \emph{J.
  ACM}, vol.~28, no.~4, pp. 765--768, Oct. 1981.

\bibitem{DynP1}
A.~{Ghasempour} and H.~{Amindavar}, ``{Fixed channel assignment using new
  dynamic programming approach in cellular radio networks},'' \emph{Computer
  and Electrical Engineering, Elsevier}, vol.~31, no.~4, pp. 303--333,
  June-July 2005.

\bibitem{DynP2}
------, ``{Channel assignment for cellular radio using extended dynamic
  programming},'' \emph{International Journal of Electronics and Communications
  (AEÜ), Elsevier}, vol.~59, no.~7, pp. 401--409, Nov. 2005.

\bibitem{Bbound}
D.~R. Morrison, S.~H. Jacobson, J.~J. Sauppe, and E.~C. Sewell,
  ``Branch-and-bound algorithms,'' \emph{Discret. Optim.}, vol.~19, pp.
  79--102, Feb. 2016.

\bibitem{Kolman}
B.~Kolman, R.~Busby, and S.~Ross, ``Discrete mathematical structures,''
  \emph{Prentice Hall}, 1999.

\bibitem{Fisher}
M.~L. Fisher, ``The lagrangian relaxation method for solving integer
  programming problems,'' \emph{Manage. Sci.}, vol.~50, pp. 1861--1871, Dec.
  2004.

\bibitem{LinShu}
S.~Lin, N.~Meng, and W.~Li, ``Optimizing constraint solving via dynamic
  programming,'' in \emph{Proc. of the Twenty-Eighth IJCAI 2019}, 7 2019, pp.
  1146--1154.

\bibitem{Plancher}
B.~Plancher, Z.~Manchester, and S.~Kuindersma, ``Constrained unscented dynamic
  programming,'' 09 2017, pp. 5674--5680.

\bibitem{Zakir2}
I.~Z. Ahmed, H.~Sadjadpour, and S.~Yousefi, ``{Single-user mmwave massive MIMO:
  Svd-based adc bit allocation and combiner design},'' \emph{SPCOM-2018}, pp.
  357--361, July 2018.

\bibitem{NetApp}
M.~Ziegelmann, \emph{Constrained shortest paths and related problems -
  constrained network optimization.}, Jan. 2007.

\bibitem{RLbook}
E.~Altman, \emph{Constrained markov decision processes}.\hskip 1em plus 0.5em
  minus 0.4em\relax Taylor \& Francis, 1999.

\bibitem{RL2}
P.~Geibel, ``Reinforcement learning for mdps with constraints,'' \emph{Machine
  Learning: ECML 2006}, pp. 646--653, 2006.

\bibitem{Boyd}
V.~{Boyd}, \emph{Convex optimization}.\hskip 1em plus 0.5em minus 0.4em\relax
  New York, NY, USA: Cambridge University Press, 2004.

\bibitem{Ndale}
N.~Dale, \emph{C++ plus data structures}, 5th~ed.\hskip 1em plus 0.5em minus
  0.4em\relax USA: Jones and Bartlett Publishers, Inc., 2011.

\bibitem{Watkins}
C.~Watkins and P.~Dayan, ``Technical note: Q-learning,'' \emph{Machine
  Learning}, vol.~8, no.~3, pp. 279--292, May 1992.

\bibitem{Maria}
M.~{Bocicor}, G.~{Czibula}, and I.~{Czibula}, ``A reinforcement learning
  approach for solving the fragment assembly problem,'' \emph{ISSNASC-2011},
  pp. 191--198, Sep. 2011.

\bibitem{DFA2}
K.-W. Huang, J.-L. Chen, C.-S. Yang, and C.-W. Tsai, ``{A memetic particle
  swarm optimization algorithm for solving the DNA fragment assembly
  problem},'' \emph{NCA}, vol.~26, no.~3, pp. 495--506, April 2015.

\bibitem{SimAn2}
D.~Henderson, S.~Jacobson, and A.~Johnson, ``The theory and practice of
  simulated annealing,'' \emph{Handbook of Metaheuristics}, pp. 287--319, 04
  2006.

\bibitem{Zakir5}
I.~Z. {Ahmed}, H.~{Sadjadpour}, and S.~{Yousefi}, ``{ADC bit allocation for
  massive MIMO using modified dynamic programming},'' \emph{ANTS-2019}, pp.
  1--6, Dec. 2019.

\bibitem{boundTSP}
W.~J. Cook, \emph{In pursuit of the traveling salesman: mathematics at the
  limits of computation}.\hskip 1em plus 0.5em minus 0.4em\relax Princeton
  University Press, 2012.

\bibitem{Jay}
J.~Shendure, S.~Balasubramanian, G.~M. Church, W.~Gilbert, J.~Rogers, J.~A.
  Schloss, and R.~H. Waterston, ``Dna sequencing at 40: past, present and
  future,'' \emph{Nature}, vol. 568, no. 7752, 2019.

\bibitem{Sanger}
F.~{Sanger}, A.~R. {Coulson}, G.~F. {Hong}, D.~F. {Hill}, and G.~B. {Petersen},
  ``Nucleotide sequence of bacteriophage lambda dna,'' \emph{Journal of
  Molecular Biology}, vol. 162, no.~4, pp. 729--773, Dec. 1982.

\bibitem{Smith}
T.~F. {Smith} and M.~S. {Waterman}, ``Identification of common molecular
  subsequences,'' \emph{Journal of Molecular Biology}, vol. 147, no.~1, pp.
  195--197, Mar. 1981.

\end{thebibliography}
%
\end{document}